\numberwithin{equation}{section}
\theoremstyle{plain}
\newtheorem*{t-theorem}{Theorem}
\newtheorem{theorem}{Theorem}[section]
\newtheorem{proposition}[theorem]{Proposition}
\newtheorem{corollary}[theorem]{Corollary}
\newtheorem{lemma}[theorem]{Lemma}
\theoremstyle{definition}
\newtheorem{remark}[theorem]{Remark}
\def \N {\mathbb{N}}
\def \R {\mathbb{R}}
\def \E {\mathbb{E}}
\def \P {\mathbb{P} \, }
\def \NN {\mathcal{N}}
\def \MM {\mathcal{M}}
\def \a {\alpha}
\def \e {\varepsilon}
\def \d {\delta}
\def \D {\Delta}
\def \l {\lambda}
\def \t {\tau}
\def \Om {\Omega}
\def \etc {,\ldots,}
\def \unconv {{\rm unc.conv}}
\def \conv {{\rm conv}}
\def \absconv {{\rm abs.conv}}
\def \vol {{\rm vol}}
\newcommand{\pr}[2]{\langle {#1} , {#2} \rangle}
\newcommand{\norm}[1]{\left \| #1 \right \|}
\title[Complexity of the set of unconditional convex bodies]{On the complexity of the set of unconditional convex bodies
   }
\author{Mark Rudelson}\thanks{ Department of Mathematics, University of Michigan. Partially supported by NSF grants DMS-01161372, DMS-1464514,  and
USAF Grant FA9550-14-1-0009.}
\begin{document}

\begin{abstract}
 We show that for any $1 \le t \le \tilde{c} n^{1/2} \log^{-5/2} n$, the set of unconditional convex bodies in $\R^n$ contains a $t$-separated subset of cardinality at least
 \[
   \exp \left( \exp \left( \frac{c}{t^2 \log^4 (1+t)} \, n \right) \right).
 \]
 This implies the existence of an unconditional convex body in $\R^n$ which cannot be approximated within the distance $d$ by a projection of a polytope with $N$ faces unless $N \ge \exp(c(d) n)$.

 We also show that for $t \ge 2$, the cardinality of a $t$-separated set of completely symmetric bodies in $\R^n$ does not exceed
   \[
    \exp \left( \exp \left( C \frac{\log^2 n}{\log t}  \right) \right).
  \]
\end{abstract}

\maketitle

\section{introduction}

In \cite{BV} Barvinok and Veomett posed a question whether any $n$-dimen\-sional convex symmetric body can be approximated by a projection of a section of  a simplex whose dimension is subexponential in $n$. The importance of this question stems from the fact that the convex bodies generated this way allow an efficient construction of the membership oracle. The question of Barvinok and Veomett has been answered in \cite{LRT}, where it was shown
that for all $1\leq n\leq N$,
there exists an $n$-dimensional symmetric convex body $B$ such that
for every $n$-dimensional convex body $K$ obtained as a projection of a
section of an $N$-dimensional simplex one has
$$
    d(B, K) \geq  c    \sqrt{  \frac{ n }{ \ln \frac{2 N \ln (2N)}{n}  }} ,
$$
where $d(\cdot, \cdot)$ denotes the Banach-Mazur distance
and $c$ is an absolute positive constant. Moreover, this result is sharp up to
a logarithmic factor.

One of the main steps in the proof of this result was an estimate of the complexity of the set of all convex symmetric bodies in $\R^n$, i. e., the Minkowski or Banach--Mazur compactum.
The complexity is measured in terms of the maximal size of a $t$-separated set with respect to the Banach--Mazur distance
\[
 d(K,D)= \inf \{\l \ge 1 \mid  D \subset TK \subset \l D \},
\]
where the infimum is taken over all linear operators $T: \R^n \to \R^n$.
A set $A$ in a metric space $(X,d)$ is called $t$-separated if the distance between any two distinct points of $A$ is at least $t$.
It follows from  \cite{LRT} that for any $1 \le t \le cn$, the set of all $n$-dimensional convex bodies contains a $t$-separated subset of cardinality at least
\begin{equation}
  \label{eq: entropy-Minkowski}
  \exp(\exp(cn/t)).
\end{equation}
More precisely, Theorem 2.3 \cite{LRT} asserts that for any $2n \le M \le e^n$, there exists a probability measure $\P_M$ on the set of convex symmetric polytopes such that
\begin{equation}
 \label{eq: Gluskin distance}
 \P_M \otimes \P_M \left( \left\{(K', K'') \mid  d(K',
     K'')\leq  \frac{c \, n}{\ln (M/n)}
\right\}\right) \leq   2e^{- n M} .
\end{equation}
  This probabilistic estimate together with the union bound implies the required lower bound on the maximal size of a $t$-net.

Note that for $t=O(1)$, the estimate above shows that the complexity of the Minkowski compactum is doubly exponential in terms of the dimension. This fact has been independently established by Pisier \cite{P}, who asked whether a similar statement holds for the set of all unconditional convex bodies and for the set of all completely symmetric bodies. We show below that the answer to the first question is affirmative, and to the second one negative.

Consider unconditional convex bodies first.
A convex symmetric body $K \subset \R^n$ is called unconditional if it symmetric with respect to all coordinate hyperplanes. This property can be conveniently reformulated in terms of the norm generated by $K$. For $x \in \R^n$, set
\[
 \norm{x}_K = \min \{a \ge 0 \mid x \in aK \}.
\]
The body $K$ is unconditional if for any $x=\sum_{j=1}^n x_j e_j$, and for any $J \subset [n]$,
\[
  \norm{\sum_{j \in J} x_j e_j}_K \le   \norm{ \sum_{j \in [n]} x_j e_j}_K,
\]
i.e., whenever all coordinate projections are contractions.

Our main result shows that the complexity of the set $\mathcal{K}_n^{unc}$ of unconditional convex bodies at the scale $t$ is doubly exponential as long as $t=O(1)$. More precisely, we prove the following theorem.
\begin{theorem}
 \label{thm: entropy}
 Let $1 \le t \le \tilde{c} n^{1/2} \log^{-5/2} n$.
 The set of $n$-dimensional unconditional convex bodies contains a $t$-separated set of cardinality at least
 \[
   \exp \left( \exp \left( \frac{c}{t^2 \log^4 (1+t)} \, n \right) \right).
 \]
 Here, $\tilde{c}$ and $c$ are positive absolute constants.
\end{theorem}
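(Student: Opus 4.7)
I would follow the Gluskin-style probabilistic strategy used to derive \eqref{eq: Gluskin distance}, modified so that the random bodies automatically lie in $\mathcal{K}_n^{unc}$. Fix a parameter $M$ with $\log M$ of order $cn/(t^2\log^4(1+t))$, construct a probability measure $\mathbb{P}_M$ on $\mathcal{K}_n^{unc}$, and aim for the Gluskin-type inequality
\[
  \mathbb{P}_M \otimes \mathbb{P}_M \bigl(\{(K', K'') : d(K', K'') \le t\}\bigr) \le 2 \exp(-nM).
\]
Drawing $\exp(nM/2)$ bodies independently from $\mathbb{P}_M$ and applying the union bound then yields the desired $t$-separated subset of cardinality $\exp(nM/2) \asymp \exp(\exp(cn/(t^2\log^4(1+t))))$.

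\textbf{The random unconditional polytope.}
The natural candidate is the \emph{unconditional} hull of random vectors: sample $N$ i.i.d.\ random vectors $X_1,\dots,X_N$ in $\R^n$ (for instance sparse $\pm 1$ vectors of appropriate sparsity, or Gaussians of suitable variance), with $N$ tuned to $M$, and set
\[
 K = \absconv\Bigl(\{\pm e_i\}_{i=1}^n \cup \{\varepsilon\cdot X_j : \varepsilon\in\{\pm 1\}^n,\ j\in[N]\}\Bigr),
\]
where $\varepsilon\cdot X_j$ denotes the coordinate-wise product. Closing under coordinate sign changes guarantees $K\in \mathcal{K}_n^{unc}$, and is essentially free in this construction since the $X_j$ have symmetric coordinate distributions.

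\textbf{Distance estimate.}
To estimate $\mathbb{P}(d(K',K'')\le t)$, fix a linear operator $T$ and bound $\mathbb{P}(TK'\subset tK''\text{ and } K''\subset TK')$, then take a union bound over a $\delta$-net in the space of admissible operators. A John/volume-ratio argument reduces to $T$ with controlled $\|T\|\cdot\|T^{-1}\|$, where such a net has size at most $\exp(Cn^2\log(1/\delta))$. For each fixed $T$, the condition $TK'\subset tK''$ demands $T(\varepsilon\cdot X_j')\in tK''$ simultaneously for every $\varepsilon$ and $j$; by independence in $j$, the probability factorizes into $N$ essentially independent factors. For each pair $(T,j)$ one controls $\sup_{\varepsilon}\|T(\varepsilon\cdot X_j')\|_{K''}$ via a chaining / dual-Sudakov bound: unconditionality of $K''$ identifies this supremum with the maximum of a Bernoulli process indexed by $\varepsilon\in\{\pm 1\}^n$, whose expectation and tails can be estimated in terms of $\|T\|$ and the geometry of $K''$.

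\textbf{Main obstacle.}
The crux of the argument is to make the per-$(T,j)$ probability estimate robust enough to beat, after multiplication over the $N$ independent samples, both the net cost $\exp(Cn^2)$ over $T$ and the required $\exp(-nM)$ in the Gluskin-type bound. Compared with the general symmetric setting of \cite{LRT}, the new loss comes entirely from having to control $T$ on the entire $2^n$-orbit $\{\varepsilon\cdot X_j'\}_{\varepsilon}$ rather than on a single vector: the chaining bound on $\sup_\varepsilon\|T(\varepsilon\cdot X_j')\|_{K''}$ loses several logarithmic factors in $1+t$, and unionizing the containment condition across sign patterns costs an additional factor of order $t^2$ in the exponent. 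Together these explain the $t^2\log^4(1+t)$ in the statement, and the threshold $t\le \tilde c\,n^{1/2}\log^{-5/2}n$ marks the point at which the balance between the net size and the per-$T$ probability finally breaks down. Optimizing the distribution of $X_j$, the count $N$, the operator norm bound on $T$, and the net parameter $\delta$ so that all exponents align exactly is where I expect the bulk of the technical effort to lie.
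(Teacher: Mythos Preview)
Your high-level plan---Gluskin-type random unconditional polytopes, a net over linear operators, independence over the $N$ vertices, and a union bound---is exactly the paper's route. Where you depart is in the technical core, and some of the proposed choices would not work.

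First, the distribution of the $X_j$ matters much more than your parenthetical suggests. Gaussians (or any rotationally invariant law) fail: the paper observes that by concentration, the unconditional hull of $N\ge n$ i.i.d.\ uniform points on $S^{n-1}$ is within Banach--Mazur distance $O(1)$ of an independent copy with high probability, so distances above $O(1)$ are unreachable. Independent $\{0,1\}$-Bernoulli coordinates with mean $\delta$ also fail, because the all-ones vector appears with probability $\delta^n$, only singly exponential, which destroys the doubly exponential bound. The paper instead takes $X_l=\sum_{i\in I_l}e_i$ with $I_l\subset[n]$ a uniformly random subset of \emph{fixed} cardinality $\delta n$; this rigidity is essential.

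Second, the paper does not bound $\sup_\varepsilon\|T(\varepsilon\cdot X_j)\|_{K''}$ by chaining. Since $VK\subset\alpha K'$ already implies $VY_l\in\alpha K'$ for the single random vertex $Y_l=\sum_{i\in I_l}\varepsilon_i e_i$, one only needs a small-ball-type bound $\P(VY\in\alpha K')\le\exp(-c\delta^2 n)$ for one such $Y$ and then takes the $N$th power. That single-vector estimate (Proposition~\ref{prop: one vector}) is obtained by a singular-value trick: choose an interval $I\subset[1,n/2]$ of length $r\gtrsim n/\log\|V\|$ on which the singular values of $V$ vary by at most a factor $2$, project to that block $Q$, and combine a small-ball bound for $\|QY\|_2$ with large-deviation bounds for each $\|P_{E_l}QY\|_2$. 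No dual-Sudakov or Bernoulli-process chaining enters.

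Finally, the $t^2\log^4(1+t)$ does not come from unionizing over sign patterns. The construction achieves distance $\asymp 1/(\delta\log^2(1/\delta))$ with probability exponent $\asymp\delta^2 n$ (Theorem~\ref{thm: lower bound}); solving $t\asymp 1/(\delta\log^2(1/\delta))$ gives $\delta\asymp 1/(t\log^2(1+t))$, hence exponent $n/(t^2\log^4(1+t))$. The $\log$ factor arises from the length $r\asymp n/\log\|V\|$ of the singular-value interval, and the threshold $t\le\tilde c\,n^{1/2}\log^{-5/2}n$ is where the condition $\delta>C\sqrt{(\log n)/n}$---needed so that $\exp(-\exp(c\delta^2 n))$ beats the net cost $C^{n^2}$---ceases to hold.
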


Note that unlike the estimate \eqref{eq: entropy-Minkowski}, which is valid for $1 \le t \le cn$, the estimate above holds only in the range $1 \le t \le \tilde{c} n^{1/2} \log^{-5/2} n$. By a theorem of Lindenstrauss and Szankowski \cite{LS}, the maximal Banach--Mazur distance between two $n$-dimensional unconditional bodies does not exceed $Cn^{1-\e_0}$ for some  $\e_0 \ge 1/3$. This means that a non-trivial estimate of the cardinality of a $t$-separated set in $\mathcal{K}_n^{unc}$ is impossible whenever $t> n^{1-\e_0}$.

 Following the derivation of Theorem 1.1 \cite{BV}, one can show that Theorem \ref{thm: entropy} implies a result on the hardness of approximation of an unconditional convex body by a projection of a section of a simplex refining the solution of the problem posed by Barvinok and Veomett.

 \begin{corollary}
 \label{cor: Barvinok}
Let $n\leq N$. There exists an $n$-dimensional  unconditional convex  body $B$, such that
for every $n$-dimensional convex body $K$ obtained as a projection of a
section of an $N$-dimensional simplex one has
\[
    d(B, K) \geq  c
     \left( \frac{n }{  \log N} \right)^{1/4} \cdot \log^{-1} \left( \frac{n}{\log N} \right)  ,
\]
where $c$ is an absolute positive constant.
\end{corollary}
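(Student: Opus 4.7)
The plan is to follow the derivation of Theorem~1.1 in \cite{BV}: combine the doubly exponential lower bound on separated subsets of $\mathcal{K}_n^{unc}$ supplied by Theorem~\ref{thm: entropy} with an upper bound on the Banach--Mazur covering number of the class $\mathcal{S}_N^n$ of $n$-dimensional symmetric convex bodies obtainable as projections of sections of the $N$-dimensional simplex, and conclude by a pigeonhole comparison. The key feature to exploit is that the multiplicative triangle inequality for the Banach--Mazur distance squares the effective separation when one triangulates through a net, which is precisely what will eventually produce the exponent $1/4$ in the stated bound.

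The first step is to invoke the entropy estimate for $\mathcal{S}_N^n$ implicit in \cite{BV}: for some fixed absolute constant $\rho > 1$ (say $\rho = 2$), $\mathcal{S}_N^n$ admits a Banach--Mazur $\rho$-net of cardinality at most $\exp(\Phi(N))$ with $\Phi(N)$ essentially linear in $\log N$ (up to lower-order factors). Such a net is obtained by parameterizing each body by a pair consisting of a point on a Grassmannian and a linear map, netting each ingredient by a standard volumetric argument, and then converting the resulting pointwise net into a Banach--Mazur net.

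Next, I would apply Theorem~\ref{thm: entropy} at a scale $t$ in the admissible range $1 \le t \le \tilde c\, n^{1/2}\log^{-5/2} n$ to produce a $t$-separated family $B_1,\ldots,B_L$ in $\mathcal{K}_n^{unc}$ with $L \ge \exp\!\bigl(\exp\!\bigl(cn/(t^2\log^4(1+t))\bigr)\bigr)$. Assume for contradiction that every $B_i$ lies within Banach--Mazur distance $D$ of some $K_i \in \mathcal{S}_N^n$; then choosing a net point $N_i$ with $d(K_i, N_i) \le \rho$ and using the multiplicative triangle inequality gives $d(B_i, N_i) \le D\rho$. As soon as $L > \exp(\Phi(N))$, two of the $B_i$ share a common net point and hence lie at mutual Banach--Mazur distance at most $(D\rho)^2$, which contradicts the $t$-separation whenever $(D\rho)^2 < t$. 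Consequently some $B \in \mathcal{K}_n^{unc}$ satisfies $d(B, K) > D$ for every $K \in \mathcal{S}_N^n$, valid for every $D < \sqrt t / \rho$.

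The final step is to optimize $t$ subject to the cardinality constraint $cn/(t^2\log^4(1+t)) > \log \Phi(N) \asymp \log N$. Solving the resulting transcendental inequality with $u = n/\log N$ gives $t \asymp \sqrt{u}/\log^2 u$ and therefore $D \asymp \sqrt{t}/\rho \asymp u^{1/4}/\log u$, matching the Corollary. The main obstacle is the first step: extracting the precise form of $\Phi(N)$ from \cite{BV}, since the optimization is sensitive to whether $\log \Phi(N)$ grows like $\log N$ or faster, and it is exactly the (quasi-)linear-in-$\log N$ Banach--Mazur entropy of the class of projections of sections of a simplex that is responsible for the exponent $1/4$ and the single logarithmic denominator; the rest of the argument is bookkeeping.
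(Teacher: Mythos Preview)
Your outline is the same argument the paper gives: build a Banach--Mazur $\rho$-net in the class $\mathcal{S}_N^n$, compare its cardinality with the $t$-separated family from Theorem~\ref{thm: entropy}, and square via the multiplicative triangle inequality to extract the exponent $1/4$. One point needs correcting. You say $\Phi(N)$ is ``essentially linear in $\log N$''; it is not. The paper follows Lemmas~3.1--3.3 of \cite{LRT} (not \cite{BV}) to parameterize sections and projections by pairs of Grassmannian points, and the resulting net has cardinality at most $\exp(cN^2\log N)$, so $\Phi(N)\asymp N^2\log N$ is polynomial in $N$. What you actually use in the optimization is $\log\Phi(N)\asymp\log N$, and that is correct; your later computation $t\asymp\sqrt{u}/\log^2 u$, $D\asymp u^{1/4}/\log u$ with $u=n/\log N$ then goes through exactly as in the paper. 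So the only gap is the mis-stated size of $\Phi(N)$ and the wrong citation for the net construction; once you replace ``linear in $\log N$'' by ``polynomial in $N$ with $\log\Phi(N)\asymp\log N$'' and point to \cite{LRT}, the sketch is complete.
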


In particular, Corollary \ref{cor: Barvinok} means that to be able to approximate all unconditional convex bodies in $\R^n$ by projections of sections of an $N$-dimensional simplex within the distance $O(1)$, one has to take $N \ge \exp(cn)$.

Consider now the set of completely symmetric bodies.
We will call an $n$-dimensional convex body  completely symmetric if it is unconditional and invariant under all permutations of the coordinates. This term is not commonly used. In the language of normed spaces, completely symmetric convex bodies correspond to the spaces with 1-symmetric basis. However, since the term ``symmetric convex bodies'' has a different meaning, we will use ``completely symmetric'' for this class of bodies.

 The set of completely symmetric convex bodies is much smaller than the set of all unconditional ones.
  This manifests quantitatively in the fact that the cardinality of a $t$-separated set of completely symmetric bodies is significantly lower. Namely, we prove the following proposition in Section \ref{seq: completely symmetric}.
 \begin{proposition}
  \label{prop: comp-symm}
  Let $t \ge 2$. The cardinality of any $t$-separated set in $\mathcal{K}^{cs}$ does not exceed
  \[
    \exp \left( \exp \left( C \frac{\log^2 n}{\log t} \right) \right).
  \]
 \end{proposition}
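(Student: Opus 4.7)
The strategy combines three ingredients: a symmetry reduction to scalar comparisons, a Marcinkiewicz--Lorentz sandwich through the fundamental function, and a lattice-path count.

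\emph{Step 1: Symmetry reduction.} The hyperoctahedral group $B_n = \{\pm 1\}^n \rtimes S_n$ acts irreducibly on $\R^n$: any $B_n$-invariant subspace is preserved by coordinate sign-changes (so is a coordinate subspace) and by permutations (so is $\{0\}$ or $\R^n$). Averaging an optimal isomorphism $T$ realizing $d(K,D)$ over $B_n$ with respect to Haar measure and invoking Schur's lemma lets us take $T$ to be a scalar multiple of the identity. Normalizing $\norm{e_1}_K = 1$ for every body in the $t$-separated family, the condition $d(K,D) \le t$ becomes
\[
  \sup_{x \ne 0} \frac{\norm{x}_K}{\norm{x}_D} \cdot \sup_{x \ne 0} \frac{\norm{x}_D}{\norm{x}_K} \le t.
\]

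\emph{Step 2: Fundamental function and sandwich.} To each normalized body, attach $f_K: [n] \to [1, n]$, $f_K(k) := \norm{e_1 + \cdots + e_k}_K$. Unconditionality and the triangle inequality force $f_K$ to be non-decreasing with $f_K(1) = 1$ and $f_K(k) \le k$. Writing $x^*$ for the decreasing rearrangement of $|x|$, the Abel decomposition $x^* = \sum_k (x^*_k - x^*_{k+1})(e_1 + \cdots + e_k)$ together with unconditionality gives
\[
  \max_{1 \le k \le n} x^*_k f_K(k) \;\le\; \norm{x}_K \;\le\; \sum_{k=1}^n (x^*_k - x^*_{k+1}) f_K(k).
\]
Since $\sum_k (f_K(k) - f_K(k-1))/f_K(k) \le C \log n$ (a Riemann-sum comparison with $\int df/f = \log f$), the two extremal bounds differ by at most a factor $C\log n$ uniformly in $x$, so two completely symmetric bodies sharing the same fundamental function lie within Banach--Mazur distance $C\log n$.

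\emph{Step 3: Counting.} For $t \ge C\log n$, distinct bodies in a $t$-separated family must have fundamental functions multiplicatively separated by $s := t/(C\log n)$. Discretizing $[1, n]$ on the geometric scale $s$ gives $L = O(\log n/\log t)$ levels, and the number of non-decreasing sequences $[n] \to \{s^0, s^1, \ldots, s^L\}$ is $\binom{n+L}{L} \le \exp(C'' \log^2 n/\log t)$, comfortably inside the claimed bound $\exp(\exp(C \log^2 n/\log t))$. For $2 \le t \le C\log n$ the fundamental function no longer separates bodies, but the target bound is correspondingly loose (approaching $\exp(\exp(C\log^2 n))$); one then multiplies the fundamental-function count by a within-class count, where each class lies in a Banach--Mazur ball of radius $C\log n$ and is parametrized, up to the Lorentz--Marcinkiewicz slack, by the values of the norm on the polynomially sized family of two-level block vectors $a_1 \mathbf{1}_{J_1} + a_2 \mathbf{1}_{J_2}$ with $J_1, J_2$ disjoint of dyadic sizes and $a_1, a_2$ in a geometric grid.

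The main obstacle is the within-class count in the small-$t$ regime. Even though the target bound is generous, one must verify that, after fixing the discretized fundamental function, the convexity constraints coming from 1-symmetry restrict the norm to a slab thin enough that a $t$-net of it, combined with the fundamental-function count, still fits inside $\exp(\exp(C \log^2 n/\log t))$. The natural implementation is to encode each norm in the class by its values on the block test vectors above and use the Lorentz--Marcinkiewicz sandwich to control the admissible configurations.
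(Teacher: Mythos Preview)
Your Steps 1 and 2 are correct, and the large-$t$ half of Step 3 does yield the bound for $t \gtrsim (\log n)^2$: once the Lorentz--Marcinkiewicz sandwich shows that two bodies whose fundamental functions agree up to a factor $s$ satisfy $d(K,D)\le (Cs\log n)^2$, counting non-decreasing sequences on a geometric grid gives even a single-exponential bound in that range. The genuine gap is the regime $2\le t\le C(\log n)^2$, which you yourself identify as ``the main obstacle'' and do not actually resolve. The assertion that a within-class body is ``parametrized, up to the Lorentz--Marcinkiewicz slack, by the values of the norm on the polynomially sized family of two-level block vectors'' is neither proved nor, in the required precision, true: to approximate a general decreasing vector such as $(1,\tau^{-1},\tau^{-2},\ldots)$ in norm up to a factor close to $1$ you cannot use a vector with only two distinct nonzero values --- you need roughly $L\sim \log n/\log t$ levels, and the resulting family of step vectors has size $(n+L)^{L}\approx\exp(C\log^2 n/\log t)$, not $\text{poly}(\log n)$. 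Without that, nothing prevents many $t$-separated bodies from sharing the same values on all your two-level test vectors, so the within-class count is unjustified.

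The paper's proof is precisely the multi-level extension of your fundamental-function idea. It fixes $\tau=t^{1/12}$, takes as test vectors all $L$-level step vectors $\sum_{l=1}^{L}\tau^{-l}\mathbf 1_{(\psi(l-1),\psi(l)]}$ with $L$ chosen so that the tail $\sum_{x_j<\tau^{-L}}x_j e_j$ has norm at most $1-\tau^{-1}$, and shows directly that two bodies agreeing to within a factor $\tau$ on all these vectors satisfy $d(K,D)\le\tau^{6}$. The family has cardinality $\binom{n+L}{L}\le(n+L)^L$, and a $(\log\tau)$-net on the logarithms of the norm values then gives the double-exponential bound uniformly for all $t\ge 2$, with no case split. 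Your fundamental function is exactly the $L=1$ instance of this family; the missing idea is that for small $t$ one must let $L$ grow with $1/\log t$ rather than pass to a second, separate parametrization.
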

This proposition means, in particular, that the complexity of the set of completely symmetric convex bodies is not doubly exponential in the dimension, which answers the second question of Pisier.

\subsection*{Acknowledgement} The author is grateful to Olivier Gu\'edon for several suggestions which allowed to clarify the presentation.

\section{Notation and an outline of the construction}

 Let us list some basic notation  used in the proofs below. By $\P$ and $\E$ we denote the probability and the expectation. If $N$ is a natural number, then $[N]$ denotes the set of all integers from 1 to $N$.

Let $1 \le p \le \infty$. By $\norm{x}_p$ we denote the standard $\ell_p$-norm of a vector $x=(x_1 \etc x_n) \in \R^n$:
\[
 \norm{x}_p= \left( \sum_{j=1}^n |x_j|^p \right)^{1/p},
\]
and $B_p^n$ denotes the unit ball of $\ell_p^n$.
 If $A: \R^n \to \R^m$ is a linear operator, and $K_1, K_2$ are convex symmetric bodies, then $\norm{A: K_1 \to K_2}$ stands for the operator norm of $A$ considered as an operator between normed spaces with unit balls $K_1$ and $K_2$:
\[
 \norm{A: K_1 \to K_2}= \max_{x \in K_1} \norm{Ax}_{K_2}.
\]
The norm $\norm{A: B_2^n \to B_2^m}$ is denoted simply by $\norm{A}$. The Hilbert--Schmidt norm of $A$ is
\[
  \norm{A}_{HS}= \left( \sum_{i=1}^m \sum_{j=1}^n |a_{ij}|^2 \right)^{1/2}.
\]
For  $K_1 \etc K_L \subset \R^n$, we denote by $\text{abs.conv}(K_1 \etc K_L)$ their absolute convex hull
\[
  \text{abs.conv}(K_1 \etc K_L) = \{ \sum_{l=1}^L \l_l x_l \mid
  \sum_{l=1}^L |\l_l| \le 1,\  x_l \in K_l \ \text{for } l \in [L] \}.
\]
Also, we define the unconditional convex  hull of the points $x^1 \etc x^L \in \R^n$ with coordinates $x^j=(x^j_1 \etc x^j_n)$ by
\[
  \text{unc.conv}(x^1 \etc x^L)=
  \text{conv} \left((\e^1_1 x^1_1 \etc \e^1_n x^1_n) \etc (\e^L_1 x^L_1 \etc \e^L_n x^L_n) \right),
\]
where the convex hull is taken over all choices of $\e_{i}^l \in \{-1,1\}$. Obviously, $\text{unc.conv}(x^1 \etc x_L)$ is the smallest unconditional convex set containing $x^1 \etc x^L$.

Finally, $C,c, c_0$ etc. denote absolute constants whose value may change from line to line.

 The random convex bodies $K',K''$ appearing in \eqref{eq: Gluskin distance} are generalized Gluskin polytopes. Such polytopes were introduced by Gluskin \cite{Gl} to prove that the diameter of Minkowski compactum is of the order $\Om(n)$. These polytopes are constructed as the absolute convex hull of $N(n)$ independent vectors uniformly distributed over $S^{n-1}$ and a few deterministic unit vectors. Such construction, however, cannot be adopted to prove Theorem \ref{thm: entropy}. Indeed, an argument based on measure concentration shows that if  $x_1 \etc x_N, x_1' \etc x_N'$ are independent random vectors uniformly distributed over $S^{n-1}$ and $N \ge n$, then with high probability
 \[
  d( \text{unc.conv}(x_1 \etc x_N),  \text{unc.conv}(x_1' \etc x_N')) \le C,
 \]
 making it impossible to achieve distances greater than $O(1)$. Moreover, if $N/n \to \infty$, then this distance tends to 1, which does not allow to prove the doubly exponential complexity bound for distances of order $O(1)$ either.

 To avoid the problems arising in attempts to use the standard construction of the Gluskin polytopes, we give up on the assumption that the random vectors are uniformly distributed on the sphere. Instead, we fix  number $\d>0$ and $N \in \N$ depending on the desired distance and consider independent random sets $I_1 \etc I_N \subset [n]$ uniformly chosen among the sets of cardinality $\d n$.
 Here and below, we assume for simplicity that the numbers $\d n, n/2$ etc. are integer.
 Alternatively, one can take the integer part of these numbers.
 For each $l \in [N]$, set $x_l=\sum_{i \in I_l} e_i$. The random convex body $K$ will be defined as
 \begin{multline*}
  K= K(I_1 \etc I_N) \\
  = \absconv \left( \unconv (x_1 \etc x_N), \sqrt{\d n} B_1^n, \d \sqrt{n} B_2^n \right),
 \end{multline*}
 Here the scaled copies of $B_1^n$ and $B_2^n$ appear only for technical reasons, and the main role is played by the unconditional convex hull of $x_1 \etc x_N$. The main advantage of this construction is that the distance between two independent copies of such bodies can be large and can be controlled in terms of $\d$ and $N$.

  One of the important features of this construction is that the random points $x_l, \ l \in [N]$ are defined via random sets $I_l$ of a fixed cardinality.
  This means that the coordinates of $x_l$ are not independent. An alternative definition of random vertices $y_l=\sum_{i=1}^n \nu_{i,l} e_i$, where $\nu_{i,l}, \  i \in [n], l \in [N]$ are independent Bernoulli random variables taking value $1$ with probability $\d$ would have been much easier to work with.
  Yet, with such definition, $\P(\nu_{1,1}= \cdots = \nu_{n,1}=1)=\d^n$, which is only exponentially small in $n$.
  This would have made the doubly exponential bound for probability unattainable.

 We will show in Section \ref{sec: random polytopes} that the distance between two independent copies of the polytope $K$ is large with probability close to 1. This will allow us to derive Theorem \ref{thm: entropy} by an application of the union bound. The large deviation and small ball probability estimates instrumental for the proof of the main result of Section \ref{sec: random polytopes} are obtained in Section \ref{sec: deviation}.

\section{Small ball probability and large deviation bounds \\ for the linear image of a random vector}
\label{sec: deviation}

We start with establishing a concentration estimate for random quadratic forms similar to the Hanson--Wright inequality.
The following Lemma is based on Theorem 1.1 \cite{RV}.

\begin{lemma}   \label{l: random set}
 Let $J$ be a random subset of $[n]$ of size $m<n$ uniformly chosen among all such subsets.
   Denote  by $R_J=\sum_{j \in j} e_j e_j^T$ the coordinate projection on the set $J$.
  Let $Y=(\e_1 \etc \e_n)$ be the vector whose coordinates are independent symmetric $\pm 1$ Bernoulli random variables.
   Then for any $n \times n$ matrix $A$ and any $t>0$,
 \begin{align*}
   &\P \left(  \left| Y^T R_J A R_J Y - \E  Y^T R_J A R_J Y \right| \ge t \right) \\
   &    \le 2 \exp \left[ -c \left( \frac{t^2}{m \norm{A}^2} \bigwedge \frac{t}{\norm{A}} \right) \right].
 \end{align*}
\end{lemma}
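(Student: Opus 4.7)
The plan is to reduce the estimate to the Hanson--Wright inequality (Theorem~1.1 of \cite{RV}) by conditioning on the random set $J$. A direct application of Hanson--Wright to the random vector $\eta = (\e_i \I\{i \in J\})_{i=1}^n$ is unavailable, since the indicators $\I\{i \in J\}$ are not independent (they are constrained to sum to $m$). However, conditionally on $J$, the vector $R_J Y$ has independent symmetric $\pm 1$ coordinates on $J$ and zero coordinates elsewhere, which is precisely the setting of Hanson--Wright.

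Since $\E[Y^T R_J A R_J Y \mid J] = \mathrm{tr}(R_J A R_J) = \sum_{i \in J} A_{ii}$, the deviation splits naturally as
\begin{equation*}
   Y^T R_J A R_J Y - \E [Y^T R_J A R_J Y] = Q + L,
\end{equation*}
where $Q := Y^T R_J A R_J Y - \sum_{i \in J} A_{ii}$ is the conditional (quadratic) deviation and $L := \sum_{i \in J} A_{ii} - (m/n) \mathrm{tr}(A)$ is the sampling deviation of the diagonal. For $Q$, I condition on $J$ and apply Theorem~1.1 of \cite{RV} to the matrix $R_J A R_J$; the key uniform bounds are $\norm{R_J A R_J} \le \norm{A}$ (since $R_J$ is an orthogonal projection) and $\norm{R_J A R_J}_{HS}^2 \le \rank(R_J A R_J) \cdot \norm{R_J A R_J}^2 \le m \norm{A}^2$. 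Substituting these into Hanson--Wright gives
\begin{equation*}
   \P \left( |Q| \ge s \mid J \right) \le 2 \exp \left[ -c \left( \frac{s^2}{m \norm{A}^2} \wedge \frac{s}{\norm{A}} \right) \right]
\end{equation*}
uniformly in $J$; integrating over $J$ preserves this bound.

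For $L$, the quantity $\sum_{i \in J} A_{ii}$ is a sum of $m$ values sampled without replacement from $\{A_{11}, \ldots, A_{nn}\}$, each of magnitude at most $\norm{A}$. Hoeffding's inequality for sampling without replacement (e.g., via Serfling's inequality) then yields $\P(|L| \ge s) \le 2 \exp(-c s^2 / (m \norm{A}^2))$, which is, up to absolute constants, no larger than the right-hand side of the target bound. Splitting $t = t/2 + t/2$ and applying these two estimates via a union bound completes the proof. The main conceptual point is the choice of decomposition $Q + L$; the one small but crucial bookkeeping point is that $\norm{R_J A R_J}_{HS}$ is controlled by $\sqrt{m} \, \norm{A}$ (through its rank being at most $m$) rather than by $\norm{A}_{HS}$, which is what forces $m$---and not some potentially larger quantity---to appear in the denominator of the sub-Gaussian term.
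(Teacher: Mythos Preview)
Your proposal is correct and follows essentially the same approach as the paper: both split the deviation into the diagonal sampling term $L=\sum_{j\in J}a_{jj}-\frac{m}{n}\mathrm{tr}(A)$ and the (conditionally centered) quadratic term $Q$, handle $Q$ by conditioning on $J$ and invoking Theorem~1.1 of \cite{RV} with the bounds $\norm{R_J A R_J}\le\norm{A}$ and $\norm{R_J A R_J}_{HS}\le\sqrt{m}\,\norm{A}$, and combine via a union bound at level $t/2$. The only cosmetic difference is in the treatment of $L$: the paper uses Azuma's inequality along a martingale on the permutation group, whereas you invoke Hoeffding's inequality for sampling without replacement---both yield the same sub-Gaussian tail $2\exp(-cs^2/(m\norm{A}^2))$.
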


\begin{proof}
 Let us separate the diagonal and off-diagonal terms. We have
  \begin{align*}
   &\P \left(  \left| Y^T R_J A R_J Y - \E  Y^T R_J A R_J Y \right| \ge t \right) \\
   &\le \P \left( \left| \sum_{j \in J} a_{jj} - \frac{m}{n}  \sum_{j=1}^n a_{jj} \right| \ge \frac{t}{2} \right)
   + \P \left( \left| \sum_{j \in J, j \neq k} \e_j \e_k a_{jj}  \right| \ge \frac{t}{2} \right) \\
   &=: p_1+p_2.
  \end{align*}
  We estimate $p_1$ and $p_2$ separately. To estimate $p_1$, consider a function $F$ on the permutation group $\Pi_n$ defined by
  \[
   F(\pi)=\sum_{j=1}^m a_{\pi(j),\pi(j)}.
  \]
  For $k<n$, denote by $\mathcal{A}_l$ the algebra of subsets of $\Pi_n$, whose elements are the sets of permutations $\pi$ for which $\pi(1) \etc \pi(l)$ is the same. Let $X_0(\pi)=\E F(\pi)$, and for $l \le m$, set $X_l=\E[F(\pi) \mid \mathcal{A}_l]$. The sequence $X_0 \etc X_m$ defined this way is a martingale with martingale differences
  \[
   |X_{l+1}-X_l| \le \max_{j \in [n]} |a_{jj}| \le \norm{A}.
  \]
  Hence, by Azuma's inequality
  \[
   p_1=\P(|X_m-X_0| \ge t/2)
   \le 2 \exp \left(- \frac{c t^2}{m \norm{A}^2} \right).
  \]
  The estimate for $p_2$ follows directly from Theorem 1.1 \cite{RV}. Indeed, let $Y_J$ be the coordinate restriction of the vector $Y$ to the set $J$. Similarly, let $A_J$ be the square submatrix of the matrix $A$ whose rows and columns belong to $J$. Denote by $\D_J$ the diagonal of $A_J$. Then
  \[
   \norm{A_J-\D_J} \le 2 \norm{A} \quad \text{and} \quad
   \norm{A_J-\D_J}_{HS} \le \sqrt{|J|} \cdot \norm{A_J-\D_J} \le 2 \sqrt{m} \norm{A}.
  \]
  Therefore, by Theorem 1.1 \cite{RV} we have
  \begin{align*}
    p_2
    &=\P \left(|Y_J^T(A_J-\D_J)Y_J| \ge t/2 \right)
    =\E_J \P \left[|Y_J^T(A_J-\D_J)Y_J| \ge t/2 \mid J \right] \\
    &\le 2 \exp \left[ -c \left( \frac{t^2}{m \norm{A}^2} \bigwedge \frac{t}{\norm{A}} \right) \right].
  \end{align*}
\end{proof}
The small ball probability bound follows immediately from Lemma \ref{l: random set}.
 \begin{corollary} \label{cor: small ball}
 Let $B$ be an $n \times n$ matrix.
  In the notation of Lemma \ref{l: random set},
  \[
   \P \left( \norm{B R_J Y}_2 \le \sqrt{\frac{m}{2n}} \norm{B}_{HS} \right)
   \le 2 \exp \left( -c \frac{m}{n^2} \cdot \frac{\norm{B}_{HS}^4}{\norm{B}^4}  \right).
  \]
 \end{corollary}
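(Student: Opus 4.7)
The plan is to realize $\norm{B R_J Y}_2^2$ as a quadratic form in the Bernoulli vector $Y$ and then feed that form into Lemma \ref{l: random set}.

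First I would write
\[
 \norm{B R_J Y}_2^2 = Y^T R_J B^T B R_J Y,
\]
so the natural choice of matrix in the Lemma is $A = B^T B$. The operator norm is $\norm{A} = \norm{B}^2$, and since $Y$ has independent mean-zero $\pm 1$ coordinates, only the diagonal survives when we average over $Y$; averaging next over the random set $J$ yields
\[
 \E Y^T R_J A R_J Y = \frac{m}{n} \sum_{j=1}^n (B^T B)_{jj} = \frac{m}{n}\, \norm{B}_{HS}^2 .
\]

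The event $\{\norm{B R_J Y}_2 \le \sqrt{m/(2n)}\, \norm{B}_{HS}\}$ is the event that the quadratic form is at most half of its mean, so in particular it deviates from the mean by at least
\[
 t := \frac{m}{2n} \norm{B}_{HS}^2.
\]
Applying Lemma \ref{l: random set} with this $t$ and with $\norm{A}=\norm{B}^2$ gives an upper bound
\[
 2 \exp\!\left[-c\left(\frac{t^2}{m\norm{B}^4} \wedge \frac{t}{\norm{B}^2}\right)\right]
 = 2\exp\!\left[-c\left(\frac{m\,\norm{B}_{HS}^4}{4 n^2 \norm{B}^4} \wedge \frac{m\,\norm{B}_{HS}^2}{2 n\, \norm{B}^2}\right)\right].
\]

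Finally I would observe that for any $n\times n$ matrix $B$ one has $\norm{B}_{HS}^2 \le n\, \norm{B}^2$, hence $\norm{B}_{HS}^2/(n\norm{B}^2)\le 1$, which shows that the first argument of the minimum is smaller than the second (up to the factor $1/2$). This reduces the bound to the quadratic term $c\, m \norm{B}_{HS}^4 /(n^2 \norm{B}^4)$, which is exactly the exponent claimed in the corollary. There is no real obstacle here: the only arithmetic point to be careful about is the comparison of the two branches of the $\min$, since the point of the corollary is to extract a purely Gaussian-style small ball bound from the Hanson--Wright-type deviation inequality of Lemma \ref{l: random set}.
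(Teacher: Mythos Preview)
Your argument is correct and matches the paper's proof essentially line for line: set $A=B^T B$, take $t=\frac{m}{2n}\norm{B}_{HS}^2$ so that the small-ball event forces a deviation of at least $t$ from the mean $\frac{m}{n}\norm{B}_{HS}^2$, apply Lemma~\ref{l: random set}, and then use $\norm{B}_{HS}^2\le n\norm{B}^2$ to see that the Gaussian branch of the minimum dominates. There is nothing to add.
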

 \begin{proof}
  We apply Lemma \ref{l: random set} with $A=B^T B$, and $t=\frac{m}{2n} \norm{B}_{HS}^2$. In this case, $\E  Y^T R_J A R_J Y= \frac{m}{n} \text{tr}(A) = 2t$.
  Then the left hand side of the inequality above can be bounded by
  \[
       \exp \left[ -c \left( \frac{m}{n^2} \cdot \frac{\norm{B}_{HS}^4}{\norm{B}^4}
    \bigwedge
         \frac{ m}{n}  \cdot \frac{\norm{B}_{HS}^2}{\norm{B}^2} \right) \right].
  \]
  To derive the corollary, note that $\norm{B}_{HS}^2 \le n \norm{B}^2$, so the first term in the minimum is always smaller than the second one.
 \end{proof}
 Lemma \ref{l: random set} can be also applied to derive the large deviation inequality for $\norm{B R_J Y}_2$. However, the bound obtained this way will not be strong enough for our purposes. To prove the large deviation estimate we employ a different technique.

\begin{lemma}   \label{l: large deviation}
 Let $J$ be a random subset of $[n]$ of size $m<n$ uniformly chosen among all such subsets.
   Denote  by $R_J=\sum_{j \in j} e_j e_j^T$ the coordinate projection on the set $J$.
  Let $Y=(\e_1 \etc \e_n)$ be vector  whose coordinates are independent symmetric $\pm 1$ Bernoulli random variables.
   Then for any $n \times n$  matrix $B$ and any $t> \sqrt{4m/n}  \cdot \norm{B}_{HS}$,
 \[
   \P \left(  \norm{BR_J Y}_2 \ge t \right) \\
       \le 2 \exp \left( -\frac{c t^2}{\norm{B}^2} \right).
 \]
\end{lemma}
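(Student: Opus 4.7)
The plan is to decouple the two sources of randomness, $J$ and $Y$, and control each separately. Fix a realization of $J$. The map $Y \mapsto \|BR_J Y\|_2$ is convex on $\R^n$ and Lipschitz with constant $\|BR_J\| \le \|B\|$ in the Euclidean metric. Talagrand's convex Lipschitz concentration for $\pm 1$ Bernoulli variables therefore gives
\[
  \P\left( \|BR_J Y\|_2 \ge M_J + u \mid J \right) \le C \exp(-c u^2/\|B\|^2) \qquad (u > 0),
\]
where $M_J$ is the conditional median. Applying Markov's inequality to the nonnegative random variable $\|BR_J Y\|_2^2$ shows that $M_J \le \sqrt{2\, \E_Y \|BR_J Y\|_2^2} = \sqrt 2\, \|BR_J\|_{HS}$, so the only remaining randomness is that of $\|BR_J\|_{HS}$ as a function of $J$.

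Next I would control $\|BR_J\|_{HS}^2 = \sum_{j \in J} w_j$, where $w_j := \|B e_j\|_2^2$ satisfies $0 \le w_j \le \|B\|^2$ and $\sum_{j=1}^n w_j = \|B\|_{HS}^2$. This is a sum over a uniformly random size-$m$ subset of $[n]$, with mean $\mu := (m/n)\|B\|_{HS}^2$ and sampling variance at most $(m/n) \sum_j w_j^2 \le \mu \|B\|^2$. Running Doob's martingale along $\pi(1), \dots, \pi(m)$ for a uniform permutation $\pi$ with $J = \pi([m])$, in the spirit of the $p_1$-estimate in the proof of Lemma \ref{l: random set} but refined by the variance bound above rather than only by bounded differences, yields a Bernstein-type tail
\[
  \P \Bigl( \sum_{j \in J} w_j \ge \mu + s \Bigr) \le \exp\!\left(-c \min\!\left( \frac{s^2}{\mu \|B\|^2}, \, \frac{s}{\|B\|^2} \right) \right), \qquad s > 0.
\]

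To combine these two ingredients, decompose
\[
  \P(\|BR_J Y\|_2 \ge t) \le \P(\|BR_J\|_{HS} > \lambda t) + \E\bigl[ \P(\|BR_J Y\|_2 \ge t \mid J)\, \mathbf{1}_{\|BR_J\|_{HS} \le \lambda t} \bigr]
\]
for a $\lambda < 1/\sqrt{2}$ to be chosen. On the event $\|BR_J\|_{HS} \le \lambda t$ one has $M_J \le \sqrt 2 \lambda t < t$, and the first step gives conditional probability at most $C \exp(-c(1 - \sqrt 2 \lambda)^2 t^2/\|B\|^2)$. For the first term, the hypothesis $t > \sqrt{4m/n}\,\|B\|_{HS}$ translates to $t^2 > 4\mu$, so $\lambda^2 t^2 - \mu$ can be made positive and proportional to $t^2$; a short calculation using $\mu \le t^2/4$ and the above variance bound then shows that both $(\lambda^2 t^2 - \mu)^2/(\mu \|B\|^2)$ and $(\lambda^2 t^2 - \mu)/\|B\|^2$ are $\gtrsim t^2/\|B\|^2$, so the Bernstein step gives $\exp(-c t^2/\|B\|^2)$. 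Balancing the two contributions yields the claimed $2 \exp(-c t^2/\|B\|^2)$.

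The main obstacle is obtaining the correct subgaussian scaling by $\|B\|^2$ (the operator norm squared) rather than $\|B\|_{HS}^2$: as the author points out just before the statement, a direct application of Lemma \ref{l: random set} to the quadratic form $Y^T R_J B^T B R_J Y$ yields only $\exp(-c t^4/(m \|B\|^4))$, which is too weak. The Lipschitz concentration of $Y \mapsto \|B R_J Y\|_2$ for fixed $J$ is what upgrades the bound to a genuine Gaussian tail in $\|B\|$; the price is the separate concentration of $\|BR_J\|_{HS}^2$, whose variance $\mu \|B\|^2$ is precisely what keeps the Bernstein step in the right regime, and it is here that the boundary threshold $t > \sqrt{4m/n}\,\|B\|_{HS}$ becomes essential.
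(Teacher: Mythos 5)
Your proposal is correct and follows essentially the same route as the paper: condition on $J$ and apply Talagrand's convex concentration to $Y \mapsto \norm{BR_J Y}_2$ with Lipschitz constant $\norm{B}$, then separately prove a Bernstein-type tail for $\norm{BR_J}_{HS}^2 = \sum_{j \in J} \norm{Be_j}_2^2$ over the random set $J$, and combine the two, using $t > \sqrt{4m/n}\,\norm{B}_{HS}$ to keep the second step in the right regime. The only (minor) difference is in the second step: you run a Doob martingale with a variance refinement (Freedman-type) for sampling without replacement, whereas the paper compares $J$ with independent Bernoulli selectors of mean $2m/n$ and applies Bernstein's inequality; both are standard and yield the same tail up to constants.
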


\begin{proof}
 Condition on the set $J$ first. Note that $\norm{B R_J} \le \norm{B}$. Applying Talagrand's convex distance inequality \cite{Tal}, we obtain
 \[
   \P( |\norm{BY}_2- M| \ge s \mid J) \le 2 \exp \left(-\frac{s^2}{2\norm{B}^2} \right),
 \]
 where $M$ is the median of $\norm{B R_J Y}_2$. Since $M \le (\E \norm{B R_J Y}_2^2)^{1/2}=\norm{BR_J}_{HS}$, the previous inequality implies
 \begin{equation}  \label{i: cond on J}
   \P \left(    \norm{BR_J Y}_2 \ge t + \norm{B R_J}_{HS} \mid J \right)
   \le 2 \exp \left(-\frac{ct^2}{\norm{B}^2} \right).
 \end{equation}
 Set $A=B^T B$.
 To finish the proof, we have to obtain a large deviation bound for the random variable
 \[
   U:= \norm{B R_J}_{HS}^2= \text{tr}(R_J^T A R_J)= \sum_{j \in J} a_{jj}
 \]
 depending on $J$.
 The set $J$ is chosen uniformly from the sets of cardinality $m$, so the elements of $J$ are not independent.

 To take advantage of independence, let us introduce auxiliary random variables. Let $\d_1 \etc \d_n$ be independent Bernoulli $\{0,1\}$ random variables taking  value 1 with probability $2m/n$. Then $\P(\sum_{j=1}^n \d_j <m)<1/2$. Chernoff's inequality provides more precise bound for this probability, but this estimate would suffice for our purposes.  Set
 \[
   Z=F(\d_1 \etc \d_n) =\sum_{j=1}^n \d_j \ a_{jj}.
 \]
 Since $\max_{j \in [n]} a_{jj} \le \norm{A}$, we derive from Bernstein's inequality that
 \[
   \P ( Z>\t) \le \exp \left( \frac{-c \t}{\norm{A}} \right)
 \]
    for any $\t \ge  \frac{4m}{n} \cdot \text{tr}(A) = 2 \E Z$.

 Compare the random variables $U$ and $Z$. Notice that the random variable $Z$ conditioned on the event $\sum_{j=1}^m \d_j=m$ has the same distribution as $U$. Also, for any $m'<m''$,
 \[
  \P(Z>\t \mid \sum_{j=1}^m \d_j=m') \le \P(Z>\t \mid \sum_{j=1}^m \d_j=m'').
 \]
 This observation allows to conclude that for any $\t \ge   \frac{4m}{n} \cdot \text{tr}(A)$,
 \begin{align*}
    &\P \left(Z>\t \mid \sum_{j=1}^m \d_j=m \right) \cdot \sum_{m' \ge m} \P( \d_j=m') \\
    \le
    &\sum_{m' \ge m} \P \left(Z>\t \mid \sum_{j=1}^m \d_j=m'\right) \cdot \P( \d_j=m')
     \le \exp \left( \frac{-c \t}{\norm{A}} \right).
 \end{align*}
 Thus,
 \begin{align*}
  \P(U > \t)
  &=\P \left(Z>\t \mid \sum_{j=1}^m \d_j=m \right)  \\
  &\le \left( 1-\P(\sum_{j=1}^m \d_j<m)\right)^{-1} \exp \left( \frac{-c \t}{\norm{A}} \right)
  \le 2 \exp \left( \frac{-c \t}{\norm{A}} \right).
 \end{align*}
 Combining this with inequality \eqref{i: cond on J}, we obtain that for any $t> \sqrt{4m/n}  \cdot \norm{B}_{HS}$,
 \begin{align*}
    &\P \left(  \norm{BR_J Y}_2 \ge 2t \right) \\
    &\le E_J \P \left(   \norm{BR_J Y}_2 \ge t + \norm{B R_J}_{HS} \mid \norm{B R_J}_{HS} \le t \right)
    +\P (\norm{B R_J}_{HS}^2 > t^2)
    \\
    &\le 3 \exp \left( \frac{-c t^2}{\norm{B}^2} \right).
 \end{align*}

\end{proof}

\section{Distance between unconditional random polytopes} \label{sec: random polytopes}

We follow the classical scheme of estimating the distances developed for Gluskin's  polytopes, see e.g.  \cite{MTJ}.
Fix an $n \times n$ matrix $V \in GL(n)$.
Denote the singular values of $V$ by  $\norm{V}=s_1(V) \ge s_2(V) \ge \cdots \ge s_n(V)>0$.
For the Banach--Mazur distance estimate, we can normalize $V$ by assuming $s_{n/2}(V) \ge 1$ and $s_{n/2}(V^{-1}) \ge 1$.
Let $K$ and $K'$ be independent random unconditional convex bodies, and let $Y$ be a vertex of $K$.
We start with estimating the probability that $V Y \subset d K'$ for some $d>1$.
For the standard Gluskin polytopes, such estimate is obtained by volumetric considerations.
In our setting, this argument is unavailable, and we use the results of Section \ref{sec: deviation} instead.

\begin{proposition} \label{prop: one vector}
 Let $\d > C n^{-1/2}$, and let $N= \exp(c \d^2 n)$. For $l \in [N]$, let $I_l \subset[n]$ be a set of cardinality $|I_l|=m=\d n$. Define a convex body $\tilde{K}$ by
 \[
  \tilde{K}= \tilde{K}(I_1 \etc I_l)
   =\absconv \left( \sqrt{\d n} B_2^{I_1} \etc  \sqrt{\d n} B_2^{I_N}, \,  \d \sqrt{n}  B_2^n \right).
 \]
 Let $J \subset [n]$ be a random subset of cardinality $\d n$ uniformly distributed in $[n]$, and let $\e_1 \etc \e_n$ be independent symmetric $\pm 1$ Bernoulli random variables. Set
 \[
  Y= \sum_{j \in J} \e_j e_j.
 \]
 Then for any linear operator $V: \R^n \to \R^n$ with $s_{n/2}(V) \ge 1$ and $\log \norm{V} \le 1/\sqrt{\d}$,
 \[
  \P \left(VY \in \frac{c}{\sqrt{\d} \log \norm{V}} \tilde{K} \right) \le \exp(-c' \d^2 n).
 \]
\end{proposition}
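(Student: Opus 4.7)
The plan is to combine a dyadic decomposition of $V$ by singular values with a duality argument, letting the small-ball and large-deviation bounds of Section~\ref{sec: deviation} replace the volume estimates used in the classical Gluskin scheme. The support function of $\tilde{K}$ is
\[
 h_{\tilde{K}}(x) = \max\bigl(\sqrt{\delta n}\,\max_{l \le N}\|R_{I_l}x\|_2,\ \delta\sqrt{n}\,\|x\|_2\bigr),
\]
so $\|VY\|_{\tilde{K}}\ge \langle VY,\, x\rangle / h_{\tilde{K}}(x)$ for every $x$, and it suffices to exhibit a single (random) test vector for which this ratio exceeds $c/(\sqrt{\delta}\,\log\|V\|)$ outside an event of probability $\exp(-c'\delta^2 n)$.

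The natural choice of test vector comes from a dyadic decomposition. Using the SVD $V = \sum_i s_i u_i v_i^T$, I would group the indices with $s_i \ge 1$ into scales $S_k = \{i : 2^k \le s_i < 2^{k+1}\}$, $0 \le k \le K$ with $K \le \lceil \log_2\|V\|\rceil$, and set $V_{k} = \sum_{i \in S_k} s_i u_i v_i^T$. Since $s_{n/2}(V)\ge 1$ forces $\sum_k |S_k|\ge n/2$, pigeonhole produces a scale $k^*$ with $|S_{k^*}|\ge n/(2K)$; combined with $\log\|V\|\le 1/\sqrt{\delta}$ this gives $|S_{k^*}| \ge c\sqrt{\delta}\,n$. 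Take $x = V_{k^*}Y$. Because the blocks $V_k$ have pairwise orthogonal ranges, $\langle VY,\, V_{k^*}Y\rangle = \|V_{k^*}Y\|_2^2$, so the problem reduces to bounding $\|V_{k^*}Y\|_2$ from below and $h_{\tilde{K}}(V_{k^*}Y)$ from above.

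For the lower bound I would invoke Corollary~\ref{cor: small ball} with $B=V_{k^*}$: since $s_i\asymp 2^{k^*}$ for $i\in S_{k^*}$, $\|V_{k^*}\|_{HS}^2/\|V_{k^*}\|^2 \asymp |S_{k^*}|$, and the failure probability is at most $\exp(-c\delta\,|S_{k^*}|^2/n) \le \exp(-c'\delta^2 n)$; on the good event $\|V_{k^*}Y\|_2 \ge \tfrac{1}{\sqrt{2}}\sqrt{\delta}\,\|V_{k^*}\|_{HS}\gtrsim \sqrt{\delta n/K}$. For $h_{\tilde{K}}(V_{k^*}Y)$ I would show (i) $\|V_{k^*}Y\|_2\lesssim \sqrt{\delta}\,\|V_{k^*}\|_{HS}$ by a direct application of Lemma~\ref{l: large deviation}, and (ii) $\max_l \|R_{I_l}V_{k^*}Y\|_2 \lesssim \sqrt{\delta}\,\|V_{k^*}Y\|_2$. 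Conditioning on $Y$, (ii) becomes a Bernstein bound for the hypergeometric sums $\sum_{j\in I_l}(V_{k^*}Y)_j^2$ (whose mean is $\delta\|V_{k^*}Y\|_2^2$), followed by a union bound over $l\in[N]$ with $N=\exp(c\delta^2 n)$. Together these give $h_{\tilde{K}}(V_{k^*}Y)\asymp \delta\sqrt{n}\,\|V_{k^*}Y\|_2$, whence
\[
 \|VY\|_{\tilde{K}}\ \ge\ \frac{\|V_{k^*}Y\|_2}{C\delta\sqrt{n}}\ \gtrsim\ \frac{\sqrt{\delta}\,\|V_{k^*}\|_{HS}}{\delta\sqrt{n}}\ \gtrsim\ \frac{1}{\sqrt{\delta K}}\ \ge\ \frac{c}{\sqrt{\delta}\,\log\|V\|}.
\]

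The main obstacle I anticipate is step (ii): the singular vectors $u_i$ of $V$ may be arbitrarily coherent, so the coordinates of $V_{k^*}Y$ need not be uniformly spread, and the Bernstein step requires some $\ell_\infty$ control. The way around this is a two-stage argument -- first apply Lemma~\ref{l: large deviation} row-by-row of $V_{k^*}$ with a union bound over $j\in[n]$ to get $\|V_{k^*}Y\|_\infty \lesssim \sqrt{\log n/n}\,\|V_{k^*}\|_{HS}$, then feed this into Bernstein for each $\|R_{I_l}V_{k^*}Y\|_2$. Keeping each of the three failure probabilities (small ball, large deviation, and the restricted-sum maximum) below $\exp(-c\delta^2 n)$ is precisely where the hypothesis $\log\|V\|\le 1/\sqrt{\delta}$ becomes essential: it caps the number of dyadic scales $K$ so that the pigeonholed block $V_{k^*}$ retains rank $|S_{k^*}|\gtrsim \sqrt{\delta}\,n$ and Hilbert--Schmidt norm $\|V_{k^*}\|_{HS}\gtrsim \delta^{1/4}\sqrt{n}$, both of which are needed for the probabilistic inequalities to close.
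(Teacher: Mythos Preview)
Your overall architecture---dyadic pigeonhole on the singular values, test vector $V_{k^*}Y$, small-ball bound via Corollary~\ref{cor: small ball}, then a large-deviation bound on the coordinate blocks followed by a union bound over $l\in[N]$---is exactly the paper's scheme, and your lower bound on $\|V_{k^*}Y\|_2$ is fine.

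The gap is in step~(ii). You write ``Conditioning on $Y$, (ii) becomes a Bernstein bound for the hypergeometric sums $\sum_{j\in I_l}(V_{k^*}Y)_j^2$.'' But in this proposition the sets $I_1,\ldots,I_N$ are \emph{fixed}: the only randomness is in $J$ and the signs $\e_j$. Once you condition on $Y$ there is nothing random left, so no Bernstein/hypergeometric inequality applies. The suggested workaround (row-by-row $\ell_\infty$ control on $V_{k^*}Y$, then ``feed this into Bernstein'') has the same defect; and if one reads it instead as the deterministic bound $\|R_{I_l}V_{k^*}Y\|_2^2\le |I_l|\cdot\|V_{k^*}Y\|_\infty^2$, the loss is too large to recover $c/(\sqrt{\d}\log\|V\|)$.

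The correct move---and the paper's---is to keep the randomness in $Y$ and apply Lemma~\ref{l: large deviation} directly with $B=R_{I_l}V_{k^*}$ (the paper uses the equivalent $P_{E_l}Q$). The key observation you are missing is purely deterministic: $R_{I_l}$ has rank $\d n$, so
\[
  \|R_{I_l}V_{k^*}\|_{HS}\ \le\ \sqrt{\d n}\,\|V_{k^*}\|\ \asymp\ \sqrt{\d n}\cdot 2^{k^*}.
\]
With this in hand, Lemma~\ref{l: large deviation} at level $t=C\,2^{k^*}\d\sqrt{n}$ gives $\P(\|R_{I_l}V_{k^*}Y\|_2\ge t)\le\exp(-c\d^2 n)$ for each $l$, and the union bound over $N=\exp(c\d^2 n)$ sets (with the constant in $N$ small enough) closes. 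Combining with the small-ball event $\|V_{k^*}Y\|_2\ge c\,2^{k^*}\sqrt{\d\,|S_{k^*}|}$ yields
\[
  \frac{\|R_{I_l}V_{k^*}Y\|_2}{\|V_{k^*}Y\|_2}\ \lesssim\ \sqrt{\frac{\d n}{|S_{k^*}|}},
\]
and the rest of your duality computation goes through with this ratio in place of the $\sqrt{\d}$ you claimed; the extra factor $\sqrt{n/|S_{k^*}|}\asymp\sqrt{\log\|V\|}$ is exactly what produces the $\log\|V\|$ in the final bound.
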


\begin{proof}
 Let
 \[
  V= \sum_{i=1}^n s_j u_i v_i^{\intercal}
 \]
 be the singular value decomposition of $V$. Then there exists an interval $I =[i_1,i_2] \subset [1,n/2]$
  of cardinality
 \[
  i_2-i_1=|I|=: r\ge \frac{c_0 n}{ \log \norm{V}}
 \]
 such that $s_{i_1}/s_{i_2} \le 2$. Indeed, otherwise we would have
 \[
  \norm{V} \ge \prod_{k=1}^{n/(2r)} \frac{s_{(k-1)r+1}}{s_{kr}} \ge 2^{n/(2r)} > \norm{V}
 \]
 provided that the constant $c_0$ is chosen small enough. Set
 \[
  Q= \sum_{i \in I} s_i u_i v_i^{\intercal} \quad \text{and } P= \sum_{i \in I}  u_i u_i^{\intercal}
 \]
Since the ratio of the maximal and the minimal singular values of $Q$ does not exceed 2,  the operator $Q$ satisfies
 \[
  r \norm{Q}^2 \ge \norm{Q}_{HS}^2 \ge (r/4) \norm{Q}^2.
 \]
 Note that for any $\a>0$, $VY \in \a \tilde{K}$ implies
 \[
  QY= PQY = PVY \in \a P \tilde{K}.
\]
 Corollary \ref{cor: small ball} applied to $B=Q$ yields
 \[
 \P (\norm{Q Y}_2 \le s_{i_2} \cdot c \sqrt{\d r} )
 \le \exp \left( -c \frac{m}{n^2} r^2 \right)
 \le (-\tilde{c} \d^2 n),
 \]
 where the last  inequality follows from the definition of $r$ and the assumption $\log \norm{V} \le 1/\sqrt{\d}$.

 For $l \in [N]$, set $E_l= \text{span} \{P e_j: \ j \in I_l \}$, and let $P_{E_l}$ be the orthogonal projection onto $E_l$.
Since $\norm{Q} =s_{i_1} \le 2 s_{i_2}$,
 \[
   \norm{P_{E_l} Q} \le 2 s_{i_2} \quad \text{and} \quad
    \norm{P_{E_l}Q}_{HS} \le 2s_{i_2} \cdot \norm{P_{E_l}}_{HS}
  \le 2 s_{i_2} \cdot \sqrt{\d n}.
 \]
  Applying Lemma \ref{l: large deviation} with $t=C s_{i_2} \d \sqrt{n} >  2 \sqrt{\d} \norm{P_{E_l} Q}_{HS}$, we get
 \[
  \P( \norm{P_{E_l}Q Y}_2 \ge C s_{i_2}  \d \cdot  \sqrt{ n}) \le e^{-\tilde{c} \d^2 n}.
 \]
 Let $\Om$ be the event that
 \begin{enumerate}
  \item $\norm{Q Y}_2 \ge s_{i_2} \cdot c \sqrt{\d r}$ and
  \item $\norm{P_{E_l}Q Y}_2 \le C s_{i_2} \cdot \d \sqrt{ n}$ for any $l \in [N]$.
 \end{enumerate}
 The previous estimates show that
 \begin{equation}
   \label{eq: prob complement Om}
  \P(\Om^c) \le  e^{-\tilde{c} \d^2 n} + N e^{-\tilde{c} \d^2 n} \le e^{-c' \d^2 n},
 \end{equation}
 where we used the assumption on $N$ with a sufficiently small constant~$c$.

 Since $P B_2^{I_l} \subset B_2^n \cap E_l$, for any $y \in \R^n$,  we have
 \[
   \max_{x \in B_2^{I_l}} \pr{Qy}{x} = \max_{x \in B_2^{I_l}} \pr{Qy}{Px}
   \le \max_{u \in B_2^n \cap E_l} \pr{Qy}{u} = \norm{P_{E_l} Q y}_2.
 \]
 Assume that $\Om$ occurs.
 Conditions (1) and (2) imply
  \begin{equation}
    \label{eq: PQ}
    \norm{P_{E_l}QY}_2 \le c \sqrt{n/r} \cdot \sqrt{\d} \norm{QY}_2
  \end{equation}
  for all $l \in [N]$.
 If $QY \in \a P \tilde{K}$, then
 \begin{align*}
     \norm{Q Y}_2^2
     &\le \a \max_{x \in P \tilde{K}} |\pr{QY}{x}|
     =  \a \max_{x \in  \tilde{K}} |\pr{QY}{x}|   \\
    &\le \a \left( \max_{l \in [N]} \max_{x \in \sqrt{\d n} B_2^{I_l}}|\pr{Q Y}{x}|
       + \max _{x \in \d \sqrt{n} B_2^n}|\pr{Q Y}{x}| \right) \\
    &\le \a \sqrt{\d n} \left( \max_{l \in [N]} \norm{P_{E_l} Q Y}_2+ \sqrt{\d} \norm{Q Y}_2 \right)\\
    &\le C \sqrt{\frac{n}{r}} \cdot \a \d \sqrt{n} \norm{Q Y}_2,
 \end{align*}
 where the last inequality holds because of \eqref{eq: PQ}.
  Combining this with (1) and recalling that $s_{i_2} \ge 1$, we obtain
 \[
   \a \ge \tilde{c} \frac{r}{n} \cdot \frac{1}{\sqrt{\d}}
   > \frac{c}{\sqrt{\d} \log \norm{V}}
 \]
 if $c$ is chosen small enough.
 This means that the event $VY \in \frac{c}{\sqrt{\d} \log \norm{V}} \tilde{K}$ implies $\Om^c$, and so, the proposition follows from estimate \eqref{eq: prob complement Om}.
\end{proof}

Proposition \ref{prop: one vector} pertains to one random vector $Y$.
The body $K= K(I_1 \etc I_N)$ contains many independent copies of $Y$, and we can use this independence to derive the desired doubly exponential bound for probability.
If $K$ and $K'$ are independent convex bodies, denote by $\P_K$ the probability with respect to $K$ conditioned on $K'$ being fixed.
\begin{corollary}  \label{cor: one body}
  Let $\d > C n^{-1/2}$, and let $N= \exp(c \d^2 n)$.
 Let $I_l, \ l \in [N]$ be independent random subsets of $[n]$ uniformly chosen among the sets of cardinality $\d n$.
 For $l \in [N]$ denote $x_l=\sum_{j \in I_l} e_j$. Consider a random convex body
 \begin{multline*}
  K= K(I_1 \etc I_N)
  = \conv \left( \unconv (x_1 \etc x_N), \sqrt{\d n} B_1^n, \d \sqrt{n} B_2^n \right),
 \end{multline*}
 and let $K'$ be an independent copy of $K$.
 Assume that $K'=K(I_1' \etc I_N')$ satisfies $\bigcup_{l=1}^N I_l' =[n]$.

 Let $V : \R^n \to \R^n$ be a linear operator such that $s_{n/2}(V) \ge 1$.  Then
 \[
  \P_K \left( V K \subset \frac{c_0}{\sqrt{\d} \log (1/\d)} K' \right)
  \le \exp \Big( - \exp( c \d^2 n) \Big).
 \]
\end{corollary}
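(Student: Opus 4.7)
The plan is to apply Proposition \ref{prop: one vector} to $N$ independent random vertices of $K$, after first replacing $K'$ by a body of the form appearing in that proposition. The point is that $VK \subset dK'$ forces every vertex of $K$, and in particular an independent random sample of $N$ of them, into a common dilate of a fixed body of the type $\tilde K$, and independence then upgrades the single-vector bound to a doubly exponential one.

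First I would set
\[
 \tilde K' := \absconv\!\left(\sqrt{\d n}\,B_2^{I_1'}\etc \sqrt{\d n}\,B_2^{I_N'},\ \d\sqrt{n}\,B_2^n\right)
\]
and check that $K' \subset \tilde K'$. The term $\d\sqrt{n}\,B_2^n$ sits in $\tilde K'$ by construction; the hull $\unconv(x_1'\etc x_N')$ is the convex hull of the sign-flips of the $x_l'$, each of which is a vertex of $B_\infty^{I_l'} \subset \sqrt{\d n}\,B_2^{I_l'}$; and because $\bigcup_l I_l' = [n]$ every basis vector $e_j$ belongs to some $B_2^{I_l'}$, so $\sqrt{\d n}\,B_1^n = \absconv(\sqrt{\d n}\,e_j) \subset \tilde K'$. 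Hence $\{VK \subset dK'\} \subset \{VK \subset d\tilde K'\}$, and it suffices to bound the latter.

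Next I would produce $N$ independent test vectors inside $K$. For each $l \in [N]$ let $\e_{j,l}$, $j \in [n]$, be independent symmetric Bernoulli signs, jointly independent of $I_1\etc I_N$, and set $Y_l = \sum_{j \in I_l} \e_{j,l}\,e_j$. Each $Y_l$ is a vertex of $\unconv(x_1\etc x_N) \subset K$, so $\{VK \subset d\tilde K'\} \subset \bigcap_{l \in [N]} \{VY_l \in d\tilde K'\}$. Crucially, the pairs $(I_l,\e_{\cdot,l})$ are independent across $l$ and $\tilde K'$ depends only on the fixed $I_l'$'s, so the $N$ events $\mathcal{E}_l := \{VY_l \in d\tilde K'\}$ are independent under $\P_K$.

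Finally, each $Y_l$ has exactly the distribution of the vector $Y$ in Proposition \ref{prop: one vector}, and $\tilde K'$ has exactly the form of the body $\tilde K$ appearing there. Taking $d = c_0/(\sqrt{\d}\log(1/\d))$, the proposition gives $\P_K(\mathcal{E}_l) \le \exp(-c'\d^2 n)$ for each $l$, and independence together with $N = \exp(c\d^2 n)$ yields
\[
 \P_K(VK \subset dK') \le \prod_{l=1}^N \P_K(\mathcal{E}_l) \le \exp(-c'\d^2 n \cdot N) \le \exp(-\exp(c''\d^2 n)).
\]
The one genuine bookkeeping point, and in my view the main obstacle, is reconciling the corollary's dilation $c_0/(\sqrt{\d}\log(1/\d))$ with the proposition's $c/(\sqrt{\d}\log\norm{V})$: the reduction proceeds cleanly only while $\log\norm{V} \lesssim \log(1/\d)$ (well inside the proposition's requirement $\log\norm V\le 1/\sqrt{\d}$). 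The remaining regime, where $\norm{V}$ is much larger, has to be dispatched by a separate, cruder argument showing $VK$ is simply too big to fit in $dK'$, or excluded when the corollary is invoked under the standard Banach--Mazur normalization $s_{n/2}(V^{-1}) \ge 1$ that constrains $\norm V$ in the first place.
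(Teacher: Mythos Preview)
Your approach is exactly the paper's: replace $K'$ by $\tilde K'$, plant independent signed vertices $Y_l$ inside $K$, apply Proposition~\ref{prop: one vector} to each, and multiply. The only point you leave open---controlling $\log\norm{V}$---is resolved in the paper by precisely your first suggestion, and it is a one-liner: on the event $VK \subset \frac{c_0}{\sqrt{\d}\log(1/\d)}K'$, the built-in inclusions $\d\sqrt{n}\,B_2^n \subset K$ and $K' \subset \sqrt{\d n}\,B_2^n$ force
\[
 \norm{V} \le \frac{c_0}{\sqrt{\d}\log(1/\d)} \cdot \frac{\sqrt{\d n}}{\d\sqrt{n}} = \frac{c_0}{\d\log(1/\d)},
\]
so $\log\norm{V} \le C\log(1/\d)$, which both meets the hypothesis $\log\norm{V}\le 1/\sqrt{\d}$ of Proposition~\ref{prop: one vector} and makes the proposition's dilate $c/(\sqrt{\d}\log\norm{V})$ at least $c_0/(\sqrt{\d}\log(1/\d))$ once $c_0$ is chosen small enough. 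No separate ``cruder regime'' or appeal to $s_{n/2}(V^{-1})\ge 1$ is needed; the ball terms in the definition of $K$ and $K'$ were inserted exactly to make this step automatic.
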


\begin{proof}
Denote for shortness
\[
 \a= \frac{c}{\sqrt{\d} \log (1/\d)}.
\]
 Assume that $VK \subset \a K'$. Since $K \supset \d \sqrt{n} B_2^n$ and $K' \subset \sqrt{\d n} B_2^n$, we have
 $
  \norm{V} \le \a/\sqrt{\d}
 $,
 which implies
 \[
  \log \norm{V} \le C \log (1/\d).
 \]
 The condition on $K'$ implies that $K' \subset \tilde{K}(I_1 \etc I_N)$,
 where the last set is defined in Proposition \ref{prop: one vector}.

 Let $\e_{j,l}, \ j \in [n], \ l \in [N]$ be independent symmetric $\pm 1$  random variables.
 Then the vectors $Y_l = \sum_{j \in I_l} \e_j e_j$ are contained in $K$. Hence, by Proposition \ref{prop: one vector},
 \begin{align*}
    \P(V K \subset \a K')
    &\le \prod_{l=1}^N \P(V Y_l \in \a K') \le \Big( \exp(-c\d^2 n) \Big)^N \\
    &\le \exp \Big( - \exp(c \d^2 n) \Big)
 \end{align*}
 as required.

\end{proof}

 Our main technical result, Theorem \ref{thm: lower bound} below, will imply Theorem~\ref{thm: entropy} almost immediately.
\begin{theorem} \label{thm: lower bound}
 Let $\d > C \sqrt{\frac{\log n}{n}}$, and let $N= \exp(c \d^2 n)$.
 For $l \in [N]$, let $I_l \subset[n]$ be a set of cardinality $|I_l|=\d n$.
 For $l \in [N]$ denote $x_l=\sum_{j \in I_l} e_j$. Consider a random convex body
 \begin{multline*}
  K= K(I_1 \etc I_N) \\
  = \absconv \left( \unconv (x_1 \etc x_N), \sqrt{\d n} B_1^n, \d \sqrt{n} B_2^n \right),
 \end{multline*}
 and let $K'$ be an independent copy of $K$.
 Then
 \[
  \P_{K,K'} \left(d(K,K') \le  \frac{c_1}{\d \log^2 (1/\d)}  \right)
  \le \exp \Big( - \exp( c_2 \d^2 n) \Big).
 \]
\end{theorem}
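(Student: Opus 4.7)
Let $\beta = c_0/(\sqrt{\delta}\log(1/\delta))$ denote the scale from Corollary \ref{cor: one body}. Since $\beta^2 \asymp 1/(\delta\log^2(1/\delta))$, it suffices to prove $d(K,K') > \beta^2/16$ with the claimed probability. The plan is to invoke Corollary \ref{cor: one body} in both directions (operators mapping $K \to K'$ \emph{and} $K' \to K$), combined with an $\eta$-net over candidate operators, and then close the argument using the normalization $s_{n/2}(V), s_{n/2}(V^{-1}) \ge 1$ described in the preamble of Section \ref{sec: random polytopes}.

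First I would condition on $\mathcal{E}_0 = \{\bigcup_l I_l = [n]\} \cap \{\bigcup_l I_l' = [n]\}$; its complement has probability at most $2n(1-\delta)^N$, which is doubly exponentially small since $N = \exp(c\delta^2 n)$ and $\delta \gtrsim \sqrt{\log n/n}$. Next, take $\mathcal{N}$ to be an $\eta$-net (in operator norm) of $\{V_0 : \norm{V_0} \le \beta/(4\sqrt{\delta}),\, s_{n/2}(V_0) \ge 1/2\}$ with $\eta = \beta\sqrt{\delta}/4$, so that $\log |\mathcal{N}| \le C n^2 \log(1/\delta)$. For each $V_0 \in \mathcal{N}$ the operator $2V_0$ satisfies $s_{n/2}(2V_0) \ge 1$, so Corollary \ref{cor: one body} gives
\[
 \P_K\bigl(2V_0 K \subset \beta K' \,\big|\, K'\bigr) \le \exp\bigl(-\exp(c\delta^2 n)\bigr).
\]
Because $n^2 \log(1/\delta)$ is absorbed by $\exp(c\delta^2 n)/2$ in the assumed range of $\delta$, a union bound over $\mathcal{N}$ shows that the event
\[
 \mathcal{E}_1 = \{\forall V_0 \in \mathcal{N}:\ 2V_0 K \not\subset \beta K'\}
\]
holds with conditional probability at least $1 - \exp(-\exp(c_2\delta^2 n))$. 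A standard perturbation --- using $K \subset \sqrt{\delta n} B_2^n$ and $K' \supset \delta\sqrt{n} B_2^n$ to convert $\norm{V-V_0} \le \eta$ into $V_0 K \subset VK + (\eta/\sqrt{\delta}) K'$ --- then upgrades $\mathcal{E}_1$ to the statement that \emph{every} $V$ with $s_{n/2}(V) \ge 1$ satisfies $VK \not\subset (\beta/4) K'$. Exchanging the roles of $K$ and $K'$, the symmetric event $\mathcal{E}_2 = \{\forall W \text{ with } s_{n/2}(W) \ge 1,\ WK' \not\subset (\beta/4) K\}$ satisfies the same probability bound.

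To finish, suppose $d(K,K') \le \lambda$ and pick $T$ with $K' \subset TK \subset \lambda K'$. Choose $\sigma \in [1/s_{n/2}(T),\, s_{n/2}(T^{-1})]$ --- non-empty because $s_{n/2}(T^{-1}) = 1/s_{n/2+1}(T) \ge 1/s_{n/2}(T)$ --- and set $V = \sigma T$, so that $s_{n/2}(V), s_{n/2}(V^{-1}) \ge 1$. The inclusions become $\sigma K' \subset VK \subset \sigma\lambda K'$ and $V^{-1} K' \subset \sigma^{-1} K$. On $\mathcal{E}_0 \cap \mathcal{E}_1 \cap \mathcal{E}_2$, $\mathcal{E}_1$ forces $\sigma\lambda > \beta/4$ while $\mathcal{E}_2$ (applied to $W = V^{-1}$) forces $\sigma^{-1} > \beta/4$; multiplying yields $\lambda > \beta^2/16$, contradicting $\lambda \le c_1/(\delta\log^2(1/\delta))$ once $c_1 < c_0^2/16$. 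The main technical obstacle is calibrating $\eta$ so that the net cardinality $\exp(C n^2 \log(1/\delta))$ stays well inside the doubly-exponential bound from Corollary \ref{cor: one body}, and this balance is precisely what forces the hypothesis $\delta \gtrsim \sqrt{\log n/n}$.
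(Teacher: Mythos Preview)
Your proof is correct and follows essentially the same Gluskin scheme as the paper: reduce to a net of operators, apply Corollary~\ref{cor: one body} at each net point, union-bound, and use the normalization $s_{n/2}(V),\,s_{n/2}(V^{-1})\ge 1$ to close the two-sided distance estimate. The only notable difference is the choice of net: the paper invokes Corollary~8 of \cite{MTJ}, which exploits the inclusion $V'(\sqrt{\delta n}B_1^n)\subset K'$ together with a volume bound on $K'$ to get a net of size $C^{n^2}$ (the $\delta$-dependence cancels), whereas you take a plain operator-norm net of size $(C/\delta)^{n^2}$. Your looser bound still fits comfortably inside $\exp(c\delta^2 n)$ under the hypothesis $\delta>C\sqrt{\log n/n}$, so nothing is lost. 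Your final step, multiplying the two one-sided obstructions $\sigma\lambda>\beta/4$ and $\sigma^{-1}>\beta/4$, is a slightly cleaner packaging than the paper's case split on which of $\|V:K\to K'\|$, $\|V^{-1}:K'\to K\|$ is small.
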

\begin{proof}
 The proof follows the general scheme developed by Gluskin.
 Fix $K'=K(I_1' \etc I_n')$ such that $\cup_{l=1}^n I_l'=[n]$.
 Let $c$ be a constant to be chosen later.
 Denote by $W(K)$ the event
 \[
  W(K, K')= \{ \exists V: \R^n \to \R^n \ s_{n/2}(V) \ge 1 \text{ and } VK \subset  \frac{c}{\sqrt{\d} \log (1/\d)} K' \}.
 \]
 We start with proving the following \\
 \textbf{Claim.} $\P_K(W(K, K')) \le \exp \Big( - \exp( c \d^2 n) \Big)$. \\

 Set $\e=\frac{c_0}{2 \log (1/\d)}$, where $c_0$ is the constant from Corollary \ref{cor: one body}. By Corollary 8 \cite{MTJ}, the set of all operators $V': \R^n \to \R^n$ such that $ s_{n/2}(V) \ge \sqrt{\d}/\e$ and $V'(\sqrt{\d n} B_1^n) \subset K'$ possesses an $\sqrt{\d}$-net $\NN'$  of cardinality
 \[
  |\NN'| \le \left( \frac{c}{\sqrt{\d}} \right)^{n^2} \cdot \left( \frac{\vol(K'/\sqrt{\d n} )}{ \vol(B_2^n)} \right)^n
  \le C^{n^2},
 \]
 where we used $K' \subset \d n B_1^n$ to obtain the last inequality.
 Hence, $W(K,K')$ possesses an $\e$-net $\NN=(\e/\sqrt{\d}) \NN'$  of cardinality $C^{n^2}$.

 Assume now that there exists an operator $V: \R^n \to \R^n$ such that $s_{n/2}(V) \ge 1$ and $VK \subset (\e/\sqrt{\d}) K'$.
 Let $V_0 \in \NN$ be such that $\norm{V-V_0} <\e$. Then
 \begin{align*}
  \norm{V_0: K \to K'}
  &\le \norm{V: K \to K'} + \norm{V-V_0: K \to K'}  \\
  &\le \frac{\e}{\sqrt{\d}} + \norm{V-V_0: \sqrt{\d n} B_2^n \to \d \sqrt{n} B_2^n} \\
  &\le \frac{2\e}{\sqrt{\d}} =   \frac{c_0}{\sqrt{\d} \log (1/\d)}.
 \end{align*}
 This means that
 \begin{align*}
   \P_K (W(K, K'))
   &\le \P_K ( \exists V_0 \in \NN \ V_0K \subset  \frac{c_0}{\sqrt{\d} \log (1/\d)} K') \\
   &\le |\NN| \cdot \max_{V_0 \in \NN} \P_K (  V_0K \subset  \frac{c_0}{\sqrt{\d} \log (1/\d)} K')
 \end{align*}
 Combining the bound for $|\NN|$ appearing above with Corollary \ref{cor: one body}, we show that this probability does not exceed $\exp \Big( - \exp( c \d^2 n) \Big)$ provided that $\d > C \sqrt{\frac{\log n}{n}}$ for a sufficiently large $C$.
 This completes the proof of the Claim.

 To derive the Theorem from the Claim, note that the inequality $d(K,K') \le  \frac{c}{\d \log^2 (1/\d)}$ guarantees the existence of a linear operator $V: \R^n \to \R^n$ such that
 \[
   \norm{V: K \to K'} \cdot \norm{V^{-1}: K' \to K} \le  \frac{c}{\d \log^2 (1/\d)}.
 \]
 Without loss of generality, we may assume that $s_{n/2}(V) \ge 1$ and $s_{n/2}(V^{-1}) \ge 1$. This means that the event $W(K,K')$ occurs with operator $V$, or $ W(K',K)$ occurs with $V^{-1}$.

Also,
 \[
   \P_{K'} \left(\bigcup_{l=1}^N I_l' \neq [n] \right) \le n (1-\d)^N \le \exp \Big( - \exp( c \d^2 n) \Big).
 \]
 Therefore,
 \begin{align*}
   &\P_{K,K'}  \left(d(K,K') \le  \frac{c}{\d \log^2 (1/\d)}  \right) \\
   &\le \E_{K'} \P_{K} \left( W(K,K') \mid \bigcup_{l=1}^N I_l' = [n] \right) + \P_{K'} \left(\bigcup_{l=1}^N I_l' \neq [n] \right) \\
   &\ + \E_{K} \P_{K'} \left( W(K',K) \mid \bigcup_{l=1}^N I_l = [n] \right) + \P_{K} \left(\bigcup_{l=1}^N I_l \neq [n] \right) \\
   &\le 4 \exp \Big( - \exp( c \d^2 n) \Big).
 \end{align*}
 Theorem \ref{thm: lower bound} is proved.

\end{proof}

\begin{proof}[Proof of Theorem \ref{thm: entropy}]
Let 
\[
   \d = \frac{c_1}{t \log^2 (1+t)},
\]
where $c_1$ is the constant from Theorem \ref{thm: lower bound}.
Choosing the constant $\tilde{c}$ in the formulation of Theorem \ref{thm: entropy} sufficiently small, we ensure that the condition  $\d > C \sqrt{\frac{\log n}{n}}$ holds, and Theorem \ref{thm: lower bound} applies. 
Set 
\[
  M
  = \exp \left( \exp  \left( \frac{c_2}{4} \d^2 n \right) \right)
 =  \exp \left( \exp \left( \frac{c}{t^2 \log^4 (1+ t)} \, n \right) \right),
\]
with some constant $c>0$.

 Consider $M$ independent  unconditional random convex bodies $K_1 \etc K_M$ which are constructed as in Theorem \ref{thm: lower bound}. By this theorem and the union bound,
 \begin{align*}
    & \P \left(\exists m, \bar{m} \in [M] \ m \ne \bar{m}, \ d(K_m,K_{\bar{m}}) \le  \frac{c}{\d \log^2 (1/\d)}  \right)
  \\
  &\le M^2 \cdot \exp \Big( - \exp( c_2 \d^2 n) \Big)
  \le \exp \left( - \exp \left( \frac{c_2}{2} \d^2 n \right) \right).
 \end{align*}
 This inequality implies that $\mathcal{K}_n^{unc}$ contains a $t$-separated set of cardinality at least $M$.
\end{proof}

We now pass to the proof of Corollary \ref{cor: Barvinok}.
Since this corollary follows from Theorem  \ref{thm: entropy} and \cite{LRT}, we will provide only a sketch of the proof instead of a complete argument.
\begin{proof}[Proof of Corollary \ref{cor: Barvinok} (sketch)]
 Fix $m, \ n \le m \le N$.
 Following the proof of Theorem 1.1 \cite{LRT}, we  estimate of the cardinality of a special $2$-net in the set of all $n$-dimensional projections of $m$-dimensional sections of the simplex $\D_N \subset \R^N$.
 By Lemmas 3.1, 3.2 \cite{LRT}, the sections of the $N$-dimensional simplex can be encoded by the points of the Grassmanian $G_{N+1,m}$ so that an $\e$-net $\mathcal{A}_1$ on the Grassmanian corresponds to a $(1+\e m \sqrt{N+1})^2$-net $\NN_1$ in the set of the sections of the simplex in the Banach-Mazur distance.
 Similarly, by Lemma 3.3 \cite{LRT}, for any $K \in \NN_1$ we can encode all $n$-dimensional projections of $K$ by points of the Grassmanin $G_{m,n}$ so that an $\e$-net $\mathcal{A}_2$ on this Grassmanian corresponds to $(1+\e m \sqrt{N+1})^2$-net in the set of the projections of $K$ in the Banach-Mazur distance.
 Combining these two results, we see that the points of $\mathcal{A}_1 \times \mathcal{A}_2$ correspond to some $(1+\e m \sqrt{N+1})^4$-net in the set of the projections of of the sections of the simplex. The nets $\mathcal{A}_1, \mathcal{A}_2$ can be chosen so that
 \[
  |\mathcal{A}_1 \times \mathcal{A}_2| \le \left( \frac{C}{\e} \right)^{\tilde{C} [(N+1)m +m n]} \le \exp \left( C' N^2 \log \frac{C}{\e} \right).
 \]
 Choosing $\e$ such that $(1+\e m \sqrt{N+1})^4=2$, we derive that there exists a $2$-net $\MM_m$ in the set of all $n$-dimensional projections of $m$-dimensional sections of $\D_N$ of cardinality
 \[
  |\MM_m| \le \exp (cN^2 \log N).
 \]
 Setting $\MM= \cup_{m=n}^N \MM_m$, we obtain a $2$-net in the set of all $n$-dimensional projections of sections of $\D_N$ satisfying a similar estimate.

 If any $n$-dimensional unconditional convex body can be approximated by a projection of a section of $\D_N$ within the distance $d$, then $\MM$ is a $(2d)$-net in the set $\mathcal{K}^{unc}_n$. This means that the cardinality of any $(2d)^2$-separated set in $\mathcal{K}^{unc}_n$ does not exceed $ \exp (cN^2 \log N)$.
 
 Let us show that this implies the desired lower bound on $d$.
 Assume that
   \[
    d \le  c'
     \left( \frac{n }{  \log N} \right)^{1/4} \cdot \log^{-1} \left( \frac{n}{\log N} \right),
\]
where the constant $c'>0$ will be chosen later.
If $c'$ is  sufficiently small, then the inequality
 \[
   (2d)^2 \le \tilde{c} n^{1/2} \log^{-5/2} n
 \]
 holds, and so Theorem \ref{thm: entropy} applies.
 By this theorem, there exists a $(2d)^2$-separated set of cardinality at least
 \[
   \exp \left( \exp \left( \frac{cn}{(2d)^4 \log^4((2d)^2+1)} \right) \right).
 \]
 Combining this with the upper estimate for this cardinality proved above, we obtain
 \[
  \frac{cn}{(2d)^4 \log^4((2d)^2+1)} \le 3 \log N.
 \]
 This contradicts our assumption on $d$ if the constant $c'$ is chosen sufficiently small.
\end{proof}

\section{Complexity of the set of completely symmetric bodies}
\label{seq: completely symmetric}

 In this section, we prove Proposition \ref{prop: comp-symm} establishing the upper bound on the cardinality of a $t$-separated set in the set of all completely symmetric bodies.
 Denote the set of all completely symmetric bodies in $\R^n$ by $\mathcal{K}^{cs}$.

\begin{proof}
Fix $1< \t<n$.
 Let $L \in \N$ be the smallest number such that
 \begin{equation}
  \label{eq: def L}
   n \t^{-L} < 1-\t^{-1}.
 \end{equation}
 Denote by $Y$ the set of all non-decreasing functions $\psi: [L] \to [n]$ such that $\psi(1)<\psi(n)$. Note that
 \[
  |Y| \le \binom{n+L}{L} \le (n+L)^L.
 \]
 Define a function $\Phi: \mathcal{K}^{cs} \to \R^Y$ by
 \[
  \Phi_{\psi}(K)=\norm{\sum_{l=1}^L \t^{-l} \sum_{\psi(l-1)<j\le \psi(l)} e_j}_K, \quad \psi \in Y, \ K \in \mathcal{K}^{cs}
 \]
 where we use the convention $\psi(0)=0$.

 Assume that $K,D \in \mathcal{K}^{cs}$.
 We will prove that if
 \[
   \t^{-1} \Phi_{\psi}(D) \le \Phi_{\psi}(K) \le \t \Phi_{\psi}(D)
\]
 for all $\psi \in Y$, then
 \begin{equation}  \label{eq: dist-cs}
  d(K,D) \le \t^6.
 \end{equation}
 To this end, take any vector $x \in \R^n$ such that $\norm{x}_K=1$ and $x_1 \ge \cdots \ge x_n \ge0$.
 Define the vector $y=\sum_{j=1}^n y_j$ with coordinates $y_j$ taking values in the set $\{0\} \cup \{\t^{-l}, \ l \in [L] \}$ so that
 \begin{itemize}
   \item $y_j \le x_j <\t y_j$, if $|x_j| \ge \t^{-L}$;
   \item $y_j=0$, if $x_j <\t^{-L}$.
 \end{itemize}
 Then
 \[
  \norm{x}_K \le \t \norm{y}_K+ \norm{\sum_{x_j < \t^{-L}} x_j e_j}_K
  \le \t \norm{y}_K+  n \t^{-L}
    \le \t \norm{y}_K+ 1- \t^{-1},
 \]
 where we used \eqref{eq: def L} in the last inequality.
 Hence, $1= \norm{x}_K \le \t^2 \norm{y}_K$.
 Also, the inequalities $\Phi_{\psi}(K) \le \t \Phi_{\psi}(D), \ \psi \in Y$ imply $\norm{y}_K \le \t \norm{y}_D$.
 Combining this with $\norm{y}_D \le \norm{x}_D$, we obtain $\norm{x}_K \le \t^3 \norm{x}_D$, and reversing the roles of $K$ and $D$, we derive
 \[
  \t^{-3} \norm{x}_D \le \norm{x}_K \le \t^3 \norm{x}_D,
 \]
 which implies \eqref{eq: dist-cs}.

 Define now a new function $\Theta: \mathcal{K}^{cs} \to \R^Y$ by  setting $\Theta_\psi(K) =\log \Phi_\psi(K)$.
 Then $\Theta(\mathcal{K}^{cs}) \subset [- \log (\t^2), \log n]^Y$. Hence, there exists a $(\log \t)$-net $\NN \subset \Theta(\mathcal{K}^{cs})$ in the $\norm{\cdot}_{\infty}$-norm of cardinality
 \[
  |\NN| \le \left( \frac{\log n}{\log \t} +2 \right)^{|Y|}.
 \]
 For any $x \in \NN$, choose a body $K_x \in \mathcal{K}^{cs}$ such that $\Theta(K_x)=x$ and consider the set $\MM=\{K_x: \ x \in \NN\}$.
 Then for any $K \in \mathcal{K}^{cs}$, there exists $K_x \in \MM$ with $\norm{\Theta(K)-\Theta(K_x)}_{\infty} \le \log \t$.
 This means that for any $\psi \in Y$,  $\t^{-1} \Phi_{\psi}(K_x) \le \Phi_{\psi}(K) \le \t \Phi_{\psi}(K_x)$ for all $\psi \in Y$, and by \eqref{eq: dist-cs}, $d(K,K_x) \le \t^6$.
 Thus, we constructed a $\t^6$-net $\MM$ in $\mathcal{K}^{cs}$ of cardinality
 \[
  |\MM| \le \left( \frac{\log n}{\log \t} +2 \right)^{|Y|}
  \le \left( \frac{\log n}{\log \t} +2 \right)^{(n+L)^L}.
 \]
 Assume now that $\t \ge 2^{1/12}$. Then, by \eqref{eq: def L}, $L<c \frac{\log n}{\log \t}$, and the previous inequality implies
 \[
  |\MM| \le \exp \left( \exp \left( C \frac{\log^2 n}{\log \t} \right) \right).
 \]
 By the multiplicative triangle inequality, the same inequality holds for the cardinality of any $\t^{12}$-separated set in $\mathcal{K}^{cs}$.
 To derive the statement of the Proposition, set $\t=t^{1/12}$.
\end{proof}
\begin{remark}
 The same proof works for all values $t>1$. However, in the case $1<t \le 2$ the estimate of $L$ in \eqref{eq: def L} in terms of $n$ and $t$ is different, which leads to a different estimate of the cardinality of a $t$-separated set.
\end{remark}


\begin{thebibliography}{GGM}
\bibitem{BV}
  A. Barvinok, E. Veomett,
  \emph{The computational complexity of convex bodies}.  Surveys on discrete and computational geometry, 117---137,
Contemp. Math., 453, Amer. Math. Soc., Providence, RI, 2008.

\bibitem{Gl}
  E. D. Gluskin,
   \emph{The diameter of the Minkowski compactum is roughly equal to n},  Funktsional. Anal. i Prilozhen. 15 (1981), no. 1, 72--73.

\bibitem{LS}
  J. Lindenstrauss, A. Szankowski,
  \emph{On the Banach-Mazur distance between spaces having an unconditional basis}. Aspects of positivity in functional analysis (Tübingen, 1985), 119--136, North-Holland Math. Stud., 122, North-Holland, Amsterdam, 1986.

\bibitem{LRT} A. Litvak, M. Rudelson, N. Tomczak-Jaegermann,
 \emph{On approximation by projections of polytopes with few facets}, Israel Journal of Math., to appear.

\bibitem{MTJ} P. Mankiewicz, N. Tomczak-Jaegermann,
         \emph{ Quotients of finite-dimensional Banach spaces; random phenomena}, Handbook of the geometry of Banach spaces, Vol. 2, 1201--1246, North-Holland, Amsterdam, 2003.

\bibitem{P} G. Pisier,
   \emph{ On the metric entropy of the Banach-Mazur compactum}, Mathematika 61 (2015), no. 1, 179--198.


 \bibitem{RV} M.Rudelson, R.Vershynin,
     \emph{ Hanson-Wright inequality and sub-Gaussian concentration}, Electron. Commun. Probab. 18 (2013), no. 82, 9 pp.

\bibitem{Tal}
M. Talagrand,
\emph{A new look at independence},
Ann. Probab. 24 (1996), no. 1, 1--34.

\end{thebibliography}
\end{document}